\apptocmd{\thebibliography}{\raggedright}{}{}
\patchcmd{\@maketitle}{\global\topskip42\p@\relax}
  {\global\topskip42\p@\relax \vspace*{-38pt}}
  {}{}
\renewcommand*{\backref}[1]{}
\renewcommand*{\backrefalt}[4]{%
    \ifcase #1 (Not cited.)%
    \or        (Cited on page~#2.)%
    \else      (Cited on pages~#2.)%
    \fi}
\newcommand{\arxiv}[1]{\href{http://arxiv.org/abs/#1}{{\tt arXiv:#1}}}
\numberwithin{equation}{section}
\theoremstyle{plain}
\newtheorem{theorem}{Theorem}[section]
\newtheorem{maintheorem}{Theorem}
\newtheorem{proposition}[theorem]{Proposition}
\newtheorem{lemma}[theorem]{Lemma}
\newtheorem{conjecture}[theorem]{Conjecture}
\newtheorem{question}[theorem]{Question}
\newtheorem*{unnumberedquestion}{Question}
\theoremstyle{definition}
\newtheorem{asm}[theorem]{Assumption}
\newtheorem{defn}[theorem]{Definition}
\newtheorem{notn}[theorem]{Notation}
\theoremstyle{remark}
\newtheorem{rmk}[theorem]{Remark}
\newenvironment{remark}[1][]{\begin{rmk}[#1] \pushQED{\qed}}{\popQED \end{rmk}}
\newtheorem{eg}[theorem]{Example}
\newenvironment{example}[1][]{\begin{eg}[#1] \pushQED{\qed}}{\popQED \end{eg}}
\DeclareMathOperator{\SU}{SU}
\newcommand\C{\ensuremath{\mathbb{C}}}
\newcommand\Z{\ensuremath{\mathbb{Z}}}
\newcommand\N{\ensuremath{\mathbb{N}}}
\DeclareMathOperator{\HH}{H}
\newcommand\Set[2]{\ensuremath{\left\{\text{#1 $|$ #2}\right\}}}
\newcommand\GroupPres[2]{\ensuremath{\left\langle \text{#1 $|$ #2} \right\rangle}}
\newcommand\cI{\ensuremath{\mathcal{I}}}
\newcommand\cL{\ensuremath{\mathcal{L}}}
\newcommand\cP{\ensuremath{\mathcal{P}}}
\newcommand\cS{\ensuremath{\mathcal{S}}}
\newcommand\bm{\ensuremath{\mathbf{m}}}
\newcommand\bs{\ensuremath{\mathbf{s}}}
\newcommand\bt{\ensuremath{\mathbf{t}}}
\newcommand\bx{\ensuremath{\mathbf{x}}}
\newcommand\by{\ensuremath{\mathbf{y}}}
\title[Word length vs lower central series depth for surface groups and RAAGs]{Word length versus lower central series depth for surface groups and RAAGs}
\author{Justin Malestein}
\address{Dept of Mathematics; University of Oklahoma; 601 Elm Ave; Norman, OK 73091}
\email{jmalestein@ou.edu}
\author{Andrew Putman}
\address{Dept of Mathematics; University of Notre Dame; 255 Hurley Hall; Notre Dame, IN 46556}
\email{andyp@nd.edu}
\thanks{JM was supported in part by a Simons Foundation Collaboration Grant 713006.  AP was supported in
part by NSF Grant DMS-2305183.}
\date{January 18, 2024}
\begin{document}

\newpage

\begin{abstract}
For surface groups and right-angled Artin groups, we prove lower bounds on the shortest word in the generators
representing a nontrivial element of the $k^{\text{th}}$ term of the lower central series.
\end{abstract}

\maketitle
\thispagestyle{empty}

\section{Introduction}

Let $G$ be a group and let $\gamma_{k}(G)$ be its lower central series:
\[\gamma_1(G) = G \quad \text{and} \quad \gamma_{k+1}(G) = [\gamma_k(G),G] \quad \text{for $k \geq 1$}.\]
If $\gamma_{k+1}(G) = 1$, then $G$ is at most $k$-step nilpotent.  Let $S$ be a finite generating set
for $G$. 

\begin{unnumberedquestion}
What is the shortest word in $S^{\pm 1}$ representing
a nontrivial element in $\gamma_k(G)$?  What are the asymptotics of the length of this word
as $k \to \infty$?
\end{unnumberedquestion}

The asymptotic question is only interesting for non-nilpotent groups.  It is also natural
to only consider groups that are residually nilpotent, i.e., such that
\[\bigcap_{k=1}^{\infty} \gamma_k(G) = 1\]
Let $G$ be a non-nilpotent residually nilpotent group with a finite generating set $S$.  Define for $g \in G$ its associated word norm:
\[\|g\|_S = \min \Set{$\ell$}{$g$ can be written as a word of length $\ell$ in $S^{\pm 1}$}.\]
The {\em lower central series depth function}
is the following function 
$d_{G,S}\colon \N \rightarrow \N$:
\[d_{G,S}(k) = \min \Set{$\|g\|_S$}{$g \in \gamma_k(G)$, $g \neq 1$}.\]
Though $d_{G,S}(k)$ depends on the generating set $S$, its asymptotic behavior as $k \to \infty$ is independent
of $S$.  Our goal in this paper is to find bounds on $d_{G,S}(k)$ for
several natural classes of groups $G$.

\subsection{Free groups}
For $n \geq 2$, let $F_n$ be the free group on
$S = \{x_1,\ldots,x_n\}$.  These are the most fundamental examples
of groups that are residually nilpotent but not nilpotent \cite{Magnus}, and both lower and upper bounds on $d_{F_n,S}(k)$ have been studied:
\begin{itemize}
\item Using the free differential calculus, Fox \cite[Lemma 4.2]{FoxFree1} proved that
$d_{F_n,S}(k) \geq \frac{1}{2} k$ for $k \geq 1$.
In \cite[Theorem 1.2]{MalesteinPutmanFree}, the authors improved this to $d_{F_n,S}(k) \geq k$.
\item In \cite[proofs of Theorems 1.3 and 1.5]{MalesteinPutmanFree}, the authors proved that
$d_{F_n,S}(k) \leq \frac{1}{4}(k+1)^2$.  Elkasapy--Thom \cite[Theorem 2.2]{ElkasapyThom} then
improved this to a bound that grows like $k^c$ with $c \approx 1.4411$, and later Elkasapy \cite[Theorem 1.1]{Elkasapy} slightly
improved this to $k^c$ where $c = \log_\varphi 2$ and $\varphi$ is the golden ratio.
\end{itemize}
The growth rate of $d_{F_n,S}(k)$ thus lies between $k$ and $k^{\log_\varphi 2}$, and Elkasapy conjectured
that the growth rate is $k^{\log_\varphi 2}$.  

\begin{remark}
Kuperberg \cite{Kuperberg} showed that finding short words that are deep in $\gamma_k(F_n)$ is related
to the problem of approximating elements of $\SU(d)$ by elements of a
dense subgroup.  He showed you can use such short elements of $\gamma_k(F_n)$ in
an algorithm that solves the following problem: given a finitely generated dense subgroup $\Gamma$ of $\SU(d)$, a
specified error tolerance, and a specific element $M \in \SU(d)$, construct short words in $\Gamma$
that approximate $M$ to the given tolerance.
See \cite{Kuperberg} for precise results and more context.
\end{remark}

\subsection{Upper bounds}
\label{subsection:upperbounds}

Now let $G$ be a non-nilpotent residually nilpotent group with a finite generating set $S$.  If
$G$ contains a non-abelian free subgroup, then using the work of Elkasapy--Thom discussed above
we can find an upper bound on $d_{G,S}(k)$ that grows\footnote{Precise upper bounds are
more complicated and depend on how the free subgroup is embedded in $G$.} like $k^{1.4411}$.  However, lower bounds
on $d_{G,S}(k)$ do not follow from the analogous results for free groups, so for
the rest of this paper we focus on lower bounds.

\subsection{Surface groups}
Let $\Sigma_g$ be a closed oriented genus $g \geq 2$ surface and let
\[\pi = \pi_1(\Sigma_g) = \GroupPres{$a_1,b_1,\ldots,a_g,b_g$}{$[a_1,b_1] \cdots [a_g,b_g] = 1$}.\]
Here our convention is that $[x, y] = xyx^{-1}y^{-1}$.
The surface group $\pi$ is residually nilpotent but not nilpotent \cite{Baumslag, Frederick}, and shares many features
with free groups.  Since $g \geq 2$, the subgroup of $\pi$ generated by $a_1$ and $b_1$ is a rank $2$
free group.  As in \S \ref{subsection:upperbounds} above, this implies a
$k^{1.4411}$ upper bound on the growth rate of $d_{\pi,S}(k)$.

However, lower bounds are more problematic.  The known lower bounds for free groups use
the free differential calculus, and there is no analogue of the free differential
calculus for surface groups.\footnote{The free derivatives are derivations
$d\colon F_n \rightarrow \Z[F_n]$.  For a group $G$, if there exist nontrivial derivations
$d\colon G \rightarrow \Z[G]$ then $\HH^1(G;\Z[G]) \neq 0$.  If $G$ has a compact $K(G,1)$ this implies that $G$ has more
than one end \cite{ScottWallTopological}, so $G$ cannot be a one-ended group like a surface group.}
The lower bounds for free groups can also be derived using
the ``Magnus representations'' from free groups to units in rings of power series with
noncommuting variables,
but again it seems hard to construct suitable analogues for surface groups.
Nevertheless, we are able to prove the following:

\begin{maintheorem}
\label{maintheorem:surfacegroups}
Let $\pi$ be a nonabelian surface group with standard generating set
$S = \{a_1,b_1,\ldots,a_g,b_g\}$.  Then for all $k \geq 1$ we have
$d_{\pi,S}(k) \geq \frac{1}{4} k$.
\end{maintheorem}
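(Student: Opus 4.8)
The plan is to transfer the known lower bound for free groups (Theorem 1.2 of \cite{MalesteinPutmanFree}, giving $d_{F_n,S}(k) \geq k$) to the surface group $\pi$ via a carefully chosen homomorphism. The obstacle noted in the introduction is that the free differential calculus does not exist for $\pi$ directly; but it does exist on a free group mapping onto $\pi$, so the natural strategy is to find a retraction or a useful surjection $\phi\colon F \to \pi$ from a free group and pull the problem back.

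Concretely, I would proceed as follows. First, observe that the surface relator $r = [a_1,b_1]\cdots[a_g,b_g]$ lies deep in the lower central series of the free group $F = F_{2g}$ on $\{a_1,b_1,\ldots,a_g,b_g\}$: since each commutator $[a_i,b_i]$ lies in $\gamma_2(F)$, the product $r$ lies in $\gamma_2(F)$, and in fact for the purposes of matching filtrations one needs to understand where $r$ and its normal closure sit relative to $\gamma_k(F)$. The key point is that the quotient map $q\colon F \to \pi = F/\langle\!\langle r \rangle\!\rangle$ induces maps $\gamma_k(F) \to \gamma_k(\pi)$, and one wants a lower bound on word length in $\pi$ to follow from one in $F$. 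Suppose $g \in \gamma_k(\pi)$ is nontrivial with $\|g\|_S = \ell$. Lift it to a word $w$ in $F$ of the same length $\ell$ representing a class $\tilde g \in \gamma_k(F) \cdot \langle\!\langle r\rangle\!\rangle$. The difficulty is that $w$ need not lie in $\gamma_k(F)$ on the nose.

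The \textbf{main obstacle}, and where the real work lies, is controlling the normal closure $\langle\!\langle r\rangle\!\rangle$ against the lower central series. The clean approach is to use the Magnus embedding / free-calculus machinery on $F$ but applied to the element $w$, and to show that if $w$ has short length $\ell < k/4$ then its image under the relevant Fox-derivative invariant that detects $\gamma_k$-membership must vanish modulo the contribution of $r$ --- yet $r$ itself sits at a controlled depth. Since $r$ is a product of $g \geq 2$ commutators and has length $4g$, one expects that the relator contributes to the free differential calculus only in degree $\geq 2$, and that writing $\tilde g$ using conjugates of $r$ forces any genuinely low-degree (short) representative to already be trivial in $\pi$. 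The constant $\tfrac14$ (versus the free-group constant $1$) is exactly the loss one should anticipate from passing through the relator, whose syllable structure involves factors of length roughly $4$; tracking this factor of $4$ carefully through the lift is the quantitative heart of the argument.

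In summary, I would (i) set up the surjection $q\colon F_{2g} \to \pi$ and the induced comparison of lower central series, (ii) use the free differential calculus on $F_{2g}$ to produce an invariant that vanishes on short words but is nonzero on nontrivial elements of $\gamma_k(F_{2g})$, (iii) analyze how the normal closure of the single relator $r$ interacts with this invariant, showing that adding multiples of $r$ cannot rescue a short word from detection until the word length reaches about $k/4$, and (iv) conclude that a nontrivial $g \in \gamma_k(\pi)$ must have $\|g\|_S \geq \tfrac14 k$. I expect step (iii) --- quantitatively bounding the relator's effect --- to be the crux, since it is precisely the step that has no free-group analogue and that governs the final constant.
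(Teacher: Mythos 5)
Your proposal has a genuine gap at precisely the step you identify as the crux. Writing $\pi = F/\langle\!\langle r\rangle\!\rangle$ with $F = F_{2g}$, a nontrivial $g \in \gamma_k(\pi)$ lifts to a short word $w \in \gamma_k(F)\cdot\langle\!\langle r\rangle\!\rangle$ with $w \notin \langle\!\langle r\rangle\!\rangle$, and you must show the Fox-derivative (equivalently Magnus) invariants detecting $\gamma_k(F)$ cannot be ``rescued'' by multiplying by elements of the normal closure of $r$. You do not supply any mechanism for this, and it is exactly the obstruction the introduction of the paper flags: there is no free differential calculus for $\pi$, and the Magnus-type invariants of $F$ do not descend to $\pi$ in any controlled way, because $\langle\!\langle r\rangle\!\rangle$ is enormous and contains elements of every Magnus degree. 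Your heuristic that the constant $\tfrac14$ reflects the ``syllable structure'' of the relator does not hold up even at the heuristic level: the relator has length $4g$, not $4$, so this line of reasoning would at best suggest a genus-dependent constant. Quantifying the interaction of $\langle\!\langle r\rangle\!\rangle$ with the lower central series of $F$ (the territory of Labute's work on one-relator Lie algebras) is a serious undertaking that your outline asserts rather than performs.

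The paper avoids this entirely by going in the opposite direction: instead of presenting $\pi$ as a quotient of a free group, it embeds $\pi$ as a subgroup of a right-angled Artin group $A_X$, where a Magnus representation does exist (Droms) and yields the linear bound $d_{A_X,V(X)}(k) \geq k$ of Theorem \ref{maintheorem:raag}. As explained in \S \ref{section:fromraags}, any embedding $\phi\colon \pi \hookrightarrow A_X$ with $\|\phi(s)\|_{V(X)} \leq r$ for all $s \in S$ then gives $d_{\pi,S}(k) \geq \tfrac{1}{r}k$; the content of the proof is the construction, via the Crisp--Wiest dissection-of-curves method and a CAT(0) cube complex injectivity criterion, of an explicit embedding with $r = 4$ (sending $a_k \mapsto x_{k-1}x_k^{-1}$ and $b_k \mapsto x_k z y_k x_k^{-1}$). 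That is where the factor $\tfrac14$ actually comes from, and it is genus-independent by design. If you want to salvage your approach, you would need to prove a quantitative statement about how far conjugates of $r$ can push a short word into the Magnus filtration of $F$; nothing of that sort is currently known, and the paper's subgroup (rather than quotient) strategy is the way it circumvents the problem.
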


The $\frac{1}{4}$ in this theorem is probably not optimal.  We make the following conjecture:

\begin{conjecture}
Let $\pi$ be a nonabelian surface group with standard generating set
$S = \{a_1,b_1,\ldots,a_g,b_g\}$.  Then $d_{\pi,S}(k) \geq k$ for all $k \geq 1$.
\end{conjecture}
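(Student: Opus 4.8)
The plan is to detect lower central depth in $\pi$ by a Magnus-type expansion and to reduce the conjecture to a length-versus-degree inequality, exactly as in the free case. Let $F=F_{2g}$ be free on $S$, let $R=[a_1,b_1]\cdots[a_g,b_g]$, so that $\pi=F/N$ with $N=\langle\langle R\rangle\rangle$, and let $\mu\colon F\to\Lambda:=\Z\langle\langle X_1,Y_1,\dots,X_g,Y_g\rangle\rangle$ be the Magnus embedding $a_i\mapsto 1+X_i$, $b_i\mapsto 1+Y_i$. Writing $\fm$ for the augmentation ideal, one computes $\mu(R)=1+\omega+(\deg\ge 3)$ with $\omega=\sum_{i=1}^g[X_i,Y_i]$. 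Let $I\subseteq\Lambda$ be the closed two-sided ideal generated by $\mu(R)-1$ and set $\bar\Lambda=\Lambda/I$; then $\mu$ descends to a filtered map $\bar\mu\colon\pi\to(\bar\Lambda)^\times$ carrying $\gamma_k(\pi)$ into $1+\bar\fm^{\,k}$. The free analogue of the conjecture is precisely the inequality $d_{F,S}(k)\ge k$ of \cite{MalesteinPutmanFree}, which says that for $w\ne 1$ the element $\mu(w)-1$ has a nonzero term of degree at most $\|w\|_S$; the goal is to transport this statement across the quotient $\Lambda\to\bar\Lambda$.

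First I would prove a detection lemma: $\bar\mu$ is injective and, more precisely, $\bar\mu(g)\in 1+\bar\fm^{\,k}$ if and only if $g\in\gamma_k(\pi)$. Equivalently, the symbol map from $\mathrm{gr}(\pi)=\bigoplus_k\gamma_k(\pi)/\gamma_{k+1}(\pi)$ to the associated graded of $\bar\Lambda$ is injective. This is where the special structure of surface groups enters: by Labute's computation, $\mathrm{gr}(\pi)\otimes\Q$ is the free Lie algebra on the $X_i,Y_i$ modulo the ideal generated by the single quadratic element $\omega$, the relation is inert, and the associated enveloping algebra $\bar\Lambda$ is Koszul with the expected Hilbert series. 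A Poincaré--Birkhoff--Witt argument then identifies $\mathrm{gr}(\bar\Lambda)$ with this enveloping algebra and shows the symbol map is injective, so that the lower central depth of $g$ equals the lowest degree of a nonzero term of $\bar\mu(g)-1$.

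Granting detection, the conjecture reduces to the length bound: for $g\ne 1$ in $\pi$ with $\|g\|_S=\ell$, the image $\bar\mu(g)-1$ has a nonzero term of degree at most $\ell$. If $w$ is a geodesic word representing $g$, then by the free-group inequality recalled above $\mu(w)-1$ already has such a term in $\Lambda$; the only danger is that every term of $\mu(w)-1$ in degrees $\le\ell$ lies in $I$ and is killed in $\bar\Lambda$, which would allow a short word to represent a deep element and is exactly the phenomenon responsible for the constant $\tfrac14$ rather than $1$. Since $I$ is generated in degree $2$, its part in degrees $\le\ell$ is spanned by the elements $m_1\,\omega\,m_2$ with $\deg m_1+\deg m_2\le \ell-2$, so the obstruction becomes the concrete algebraic assertion that the leading low-degree Magnus data of a geodesic word cannot be entirely absorbed into the ideal $(\omega)$.

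The hard part is precisely this last step: ruling out the absorption. I would attack it using that $\pi$ is word-hyperbolic and its standard $4g$-gon presentation satisfies Dehn's algorithm, so a geodesic word $w$ contains no long subword of $R^{\pm 1}$; the plan is to convert this geometric ``no large relator piece'' condition into the statement that the degree-$\le\ell$ part of $\mu(w)-1$ is not a combination of the $m_1\,\omega\,m_2$, equivalently to produce, for each geodesic $w$, a linear functional on the degree-$\le\ell$ part of $\Lambda$ that vanishes on $I$ in those degrees but not on the leading term of $\mu(w)$. Establishing such a quantitative inertness estimate---matching the geodesic and Dehn structure of $\pi$ against the degree-$2$ relator ideal in the Magnus algebra---is the main obstacle, and is exactly what must be solved to replace $\tfrac14$ by the sharp constant $1$.
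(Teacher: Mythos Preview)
This statement is a \emph{conjecture} in the paper, not a theorem; the paper does not prove it and indeed explains (see \S\ref{section:optimalbounds} and Conjecture~\ref{conjecture:optimalraag}) why its own RAAG-embedding method is expected to stall at the constant $\tfrac14$.  So there is no proof to compare your proposal against; the relevant question is whether your plan has a genuine chance of closing the gap.

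Your setup is sound and is the natural direct attack.  A small simplification: you do not need the full detection lemma.  The easy inclusion $\gamma_k(\pi)\subseteq D_k(\pi):=\bar\mu^{-1}(1+\bar\fm^{\,k})$ already suffices, since to show $g\notin\gamma_{\ell+1}(\pi)$ it is enough to show $\bar\mu(g)-1\notin\bar\fm^{\,\ell+1}$.  So the Labute/Koszul/PBW discussion, while correct, is not where the difficulty lies.  Your reduction is then exactly right: the conjecture is equivalent to the statement that for a geodesic word $w$ of length $\ell$ representing $g\ne 1$, one has $\mu(w)-1\notin \fm^{\ell+1}+I$ in $\Lambda$.

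The gap is precisely where you locate it, and it is the entire content of the conjecture.  You propose to use that the one-relator presentation of $\pi$ satisfies Dehn's algorithm, so a geodesic $w$ contains no subword equal to more than half of a cyclic permutation of $R^{\pm 1}$.  That is a correct and suggestive combinatorial constraint on $w$, but there is currently no mechanism translating ``$w$ has no long relator piece'' into ``the low-degree Magnus coefficients of $w$ are not a $\Lambda$-bilinear combination of $\omega$.''  The span $\sum_{\deg m_1+\deg m_2\le \ell-2} \Z\, m_1\omega m_2$ is large, and the degree-$\le\ell$ part of $\mu(w)-1$ involves all subwords of $w$, not just contiguous ones, so small-cancellation information about contiguous pieces of $w$ does not obviously control it.  Producing the linear functional you describe---vanishing on $(\omega)$ in degrees $\le\ell$ but not on the leading term of $\mu(w)$---is exactly the missing idea, and your proposal does not supply it.  In short, you have correctly reformulated the conjecture and isolated the obstruction, but the proposal remains a plan rather than a proof.
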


See \S \ref{section:optimalbounds} below for why our proof likely cannot be extended to prove
this conjecture.

\subsection{Right-angled Artin groups}
We will derive Theorem \ref{maintheorem:surfacegroups} from an analogous result
for right-angled Artin groups, which are defined as follows.
Let $X$ be a finite graph.  The associated {\em right-angled Artin group} (RAAG) is the group $A_X$ given
by the following presentation:
\begin{itemize}
\item The generators are the vertex set $V(X)$.
\item The relations are $\Set{$[x,y]=1$}{$x,y \in V(X)$ are joined by an edge}$.
\end{itemize}

\begin{example}
The free abelian group $\Z^n$ is the RAAG with $X$ the complete graph on $n$ vertices, and the free group $F_n$ is the RAAG
with $X$ a graph with $n$ vertices and no edges.
\end{example}

These groups play an important role in many areas of geometric group theory (see, e.g., \cite{CharneySurvey, WiseBook}).
Just like free groups and surface groups, they are residually nilpotent \cite{DromsThesis}, and
they are only nilpotent if they are free abelian, i.e., if $X$ is a complete graph.
The latter fact can be deduced from the basic observation that if $Y$ is a vertex-induced
subgraph of $X$, then the natural map $A_Y \to A_X$ is split injective; indeed, 
the map $A_X \to A_Y$ that kills the generators which are not vertices of $Y$ is a right inverse
for it.

\begin{remark}
\label{remark:baudisch}
More generally, Baudisch \cite{Baudisch} proved that any two elements of a RAAG either commute or generate a free
subgroup.  This implies that any nonabelian subgroup of a RAAG is non-nilpotent.
\end{remark}

Right-angled Artin groups 
often contain many surface subgroups \cite{CrispSageevSapir, CrispWiest, KimSurface, ServatiusDromsServatius}, and
we will prove Theorem \ref{maintheorem:surfacegroups} by embedding surface groups into RAAGs
and studying the lower central series depth function there.  The main result 
we need along these lines is as follows.  

\begin{maintheorem}
\label{maintheorem:raag}
Let $X$ be a finite graph that is not a complete graph,
and let $S = V(X)$ be the generating set of $A_X$.  Then
for $k \geq 1$ we have $d_{A_X,S}(k) \geq k$.
\end{maintheorem}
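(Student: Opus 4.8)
The plan is to study the lower central series of $A_X$ through a partially commutative analogue of the Magnus expansion. Let $\cA$ be the associative $\Z$-algebra of formal power series in noncommuting variables $\{X_v : v \in V(X)\}$ subject to the relations $X_u X_v = X_v X_u$ whenever $uv$ is an edge of $X$; this is the completed algebra of the trace monoid attached to $X$, graded by total degree, with degree-$d$ piece $\cA_d$ spanned by the length-$d$ traces. Define the Magnus homomorphism $\mu \colon A_X \to \cA^{\times}$ by $\mu(v) = 1 + X_v$ and $\mu(v^{-1}) = \sum_{j \geq 0}(-1)^j X_v^j$. This is well defined because the defining relations of $A_X$ are visibly respected, and by work of Duchamp--Krob it is injective and detects the lower central series: $g \in \gamma_k(A_X)$ if and only if $\mu(g)-1 \in \bigoplus_{d \geq k}\cA_d$. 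Consequently, for $g \neq 1$ the depth of $g$ (the largest $k$ with $g \in \gamma_k(A_X)$) equals the lowest degree in which $\mu(g)-1$ has a nonzero term, and this leading term is the image of $g$ in $\gamma_k/\gamma_{k+1}$, i.e.\ an element of the graph Lie algebra sitting inside $\cA_k$.

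With this dictionary, Theorem~\ref{maintheorem:raag} is exactly the assertion that every nontrivial $g \in A_X$ satisfies $\operatorname{depth}(g) \leq \|g\|_S$: a nontrivial $g \in \gamma_k(A_X)$ then has $\|g\|_S \geq \operatorname{depth}(g) \geq k$. So the whole content is the combinatorial lemma: \emph{if $g \neq 1$ is represented by a word of length $\ell$, then $\mu(g)$ has a nonzero homogeneous component of degree at most $\ell$}. I would prove this by fixing a geodesic word $g = s_1 \cdots s_\ell$ and expanding $\mu(g) = \prod_{i=1}^{\ell}\mu(s_i)$ as a sum over exponent vectors $(e_1,\dots,e_\ell)$, where the $i$-th factor contributes $X_{v_i}^{e_i}$ with $e_i \in \{0,1\}$ when $s_i$ is a positive generator and with $e_i \geq 0$ arbitrary (and sign $(-1)^{e_i}$) when $s_i$ is an inverse generator. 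The model case is when each generator occurs at most once among $s_1,\dots,s_\ell$: then the fully squarefree selection $e_i \equiv 1$ contributes $\pm[X_{v_1}\cdots X_{v_\ell}]$ to the degree-$\ell$ component, and matching the variable multiset of this trace forces $e_i \equiv 1$ once no variable repeats, so no other selection produces it. Hence the degree-$\ell$ component is nonzero and $\operatorname{depth}(g) \leq \ell$.

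The step I expect to be hardest is the general case, in which a generator repeats. Now a dropped letter ($e_i = 0$) can be compensated by an inverse occurrence of the same generator carrying a higher power ($e_j \geq 2$), so many selections land on the same low-degree trace and the naive leading coefficient can cancel; this is compounded by the partial commutativity, which further identifies traces arising from genuinely different selections. Controlling these cancellations in the lowest-degree component is precisely the phenomenon that makes RAAGs harder than free groups. I would attack it by a maximal-monomial argument: choose a total order on $V(X)$, pass to the leading Lie term $P$ of $\mu(g)-1$ in degree $k = \operatorname{depth}(g)$, expand $P$ in a Lyndon-type basis of the graph Lie algebra, and track a distinguished basis trace of length $k$ whose coefficient provably survives; the goal is to show that such a trace must already be produced (with the correct sign pattern and without total cancellation) by selections supported on at most $\ell$ positions, forcing $k \leq \ell$.

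It is worth stressing that the lemma specializes, when $X$ has no edges, to the authors' sharp free-group bound $d_{F_n,S}(k) \geq k$, so any correct proof must be at least as strong as that argument; since the free differential calculus used there has no analogue for a one-ended RAAG, the leading-coefficient estimate has to be carried out directly inside $\cA$. The main obstacle, then, is not the set-up but this direct control of low-degree cancellation in the partially commutative power series ring, and I would expect the bulk of the work — and any genuinely new ideas beyond the free-group case — to be concentrated there.
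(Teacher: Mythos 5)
Your setup (the Magnus map into the completed trace-monoid algebra, the dimension-subgroup dictionary, and the reduction to ``$\mu(g)$ has a nonzero term in degree at most $\|g\|_S$'') matches the paper exactly, and your model case is fine. But the general case is where all the content lives, and there you have only a plan (``I would attack it by a maximal-monomial argument\dots the goal is to show\dots''), not an argument. So the proposal has a genuine gap precisely at the step you flag as hardest.

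The missing idea is that the cancellation you are worried about never needs to be ``controlled'': it can be ruled out outright by combining the normal form for $A_X$ with a square-freeness observation in the trace monoid $M_X$. Represent $w$ by a \emph{fully reduced} word $s_1^{e_1}\cdots s_n^{e_n}$ (each $e_i\neq 0$, and between any two occurrences of the same generator there is a letter not commuting with it); by uniqueness of this form up to swaps, such a word is geodesic, so $\|w\|_S=|e_1|+\cdots+|e_n|\geq n$ and it suffices to produce a nonzero degree-$n$ term in $\mu(w)$. Writing $\mu(s_i^{e_i})=1+e_i\bs_i+\bs_i^2\bt_i$ and expanding the product, the selection taking the linear term from every factor contributes $e_1\cdots e_n\,\bs_1\cdots\bs_n$, and the fully reduced condition guarantees that $\bs_1\cdots\bs_n$ is square-free as an element of $M_X$ (it admits no expression with two consecutive equal letters --- a property invariant under the commutation relations). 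Every other degree-$n$ selection must take a $\bs_i^2\bt_i$ term from some factor and hence produces a non-square-free monomial, so it cannot collide with $\bs_1\cdots\bs_n$; the coefficient $e_1\cdots e_n\neq 0$ survives with no cancellation. This is exactly the scenario you describe (a dropped letter compensated by a higher power elsewhere), and square-freeness kills it in one line; no Lyndon-type basis of the graph Lie algebra or leading-term analysis is needed. Without the fully reduced normal form your ``geodesic word'' carries no structure that isolates the target monomial, which is why your sketch stalls.
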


Though Theorem \ref{maintheorem:raag} does not seem to previously appear
in the literature, it is implicit in the work of Wade (see \cite[Lemma 4.7]{WadeSurvey}), and
our proof follows his ideas.
The key tool is 
a version of the ``Magnus representation'' for RAAGs 
that was introduced by Droms in his thesis \cite{DromsThesis}, generalizing
work of Magnus on free groups.  The classical Magnus representations
are maps from $F_n$ to units in rings of power series with noncommuting variables (see \cite[Chapter 5]{MagnusKarrassSolitar}).
They contain much of the same information as the free derivatives.

\subsection{From RAAGs to surface groups}
\label{section:fromraags}

Let $G$ be a non-nilpotent residually nilpotent group with finite generating set $T$ and let $H$ be the subgroup of $G$ generated
by a finite subset $S<G$.  Each $s \in S$ can be written as a word in $T^{\pm 1}$, so we can define
\[r = \max\Set{$\|s\|_T$}{$s \in S$}.\]
For $h \in H$, we thus have
\[\|h\|_S \geq \frac{1}{r} \|h\|_T.\]
From this, we see that
\[d_{H,S}(k) \geq \frac{1}{r} d_{G,S}(k) \quad \text{for all $k \geq 1$}.\]
Since all nonabelian surface groups $\pi$ are subgroups of RAAGs, Theorem \ref{maintheorem:raag} therefore immediately
implies a linear lower bound on the lower central series depth function of $\pi$. 
However, the precise constants depend on the embedding into a RAAG, and without further
work might depend on the genus $g$.
To get the genus-independent constant $\frac{1}{4}$ from Theorem \ref{maintheorem:surfacegroups}, 
we will have to carefully control the geometry of our embeddings of surface groups into RAAGs
and ensure that we can take $r=4$ in the above.

\begin{remark}
\label{remark:lowerbound}
Many other groups can also be embedded in right-angled Artin groups, and the argument
above shows that all of them have linear lower bounds on their lower central series
depth functions (which are well-defined by Remark \ref{remark:baudisch}).
\end{remark}

\subsection{Optimal embeddings}
\label{section:optimalbounds}

It is natural to wonder if we can improve the $\frac{1}{4}$ in Theorem \ref{maintheorem:surfacegroups}
by using a more clever embedding into a RAAG.  We conjecture that this is not possible:

\begin{conjecture}
\label{conjecture:optimalraag}
Let $\pi$ be a nonabelian surface group with standard generating set
$S = \{a_1,b_1,\ldots,a_g,b_g\}$, let $X$ be a finite graph, and let
$\phi\colon \pi \hookrightarrow A_X$ be an embedding.  Then there exists
some $s \in S$ such that $\|\phi(s)\|_{V(X)} \geq 4$.
\end{conjecture}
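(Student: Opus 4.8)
The plan is to argue by contradiction: assume $\phi\colon\pi\hookrightarrow A_X$ is an embedding with $\|\phi(s)\|_{V(X)}\le 3$ for every $s\in S$, and extract an obstruction. The most robust ingredient is an abelian one coming from the fundamental class. For any group $G$ whose abelianization is free abelian there is a natural ``Hopf map'' $j_G\colon H_2(G)\to\Lambda^2 H_1(G)$, namely the edge map $H_2(G)\to H_2(G^{\mathrm{ab}})=\Lambda^2 H_1(G)$ in the five-term exact sequence of $G\twoheadrightarrow G^{\mathrm{ab}}$. For the surface group the single relator gives $j_\pi[\Sigma]=\sum_{i=1}^g \overline{a}_i\wedge\overline{b}_i$, where $\overline{x}\in H_1(\pi;\Z)$ is the class of $x$; for the RAAG each edge-relator contributes $j_{A_X}(f_{\{v,w\}})=\overline{v}\wedge\overline{w}$, so $\Image(j_{A_X})\subseteq\Lambda^2_E:=\langle\, \overline{v}\wedge\overline{w} : \{v,w\}\in E(X)\,\rangle$. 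Naturality of $j$ applied to $\phi$ then forces
\[\sum_{i=1}^g \phi_*(\overline{a}_i)\wedge\phi_*(\overline{b}_i)\;=\;\Lambda^2\phi_*\bigl(j_\pi[\Sigma]\bigr)\;=\;j_{A_X}\bigl(\phi_*[\Sigma]\bigr)\;\in\;\Lambda^2_E,\]
where $\phi_*(\overline{s})\in H_1(A_X;\Z)=\Z^{V(X)}$ is the exponent-sum vector of the word $\phi(s)$, of $\ell^1$-norm at most $\|\phi(s)\|_{V(X)}\le 3$. (Equivalently, this is the degree-two component of the induced map on associated graded Lie algebras: the surface relation $\sum_i[\overline{a}_i,\overline{b}_i]=0$ must land in the span of the RAAG's edge-brackets, tying the approach to the Magnus-type method behind Theorem \ref{maintheorem:raag}.)

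Because word length can strictly exceed the $\ell^1$-norm of the exponent vector through cancellation (e.g.\ $vwv^{-1}$ has length $3$ but exponent vector $\overline{w}$), the displayed identity controls only the abelianized data and must be supplemented by a non-abelian argument. I would next exploit the full relation $[\phi(a_1),\phi(b_1)]\cdots[\phi(a_g),\phi(b_g)]=1$ together with Servatius's solution to the word problem in $A_X$ (shuffling commuting letters and cancelling inverse pairs). One exact sub-result is already available and useful here: every nontrivial element of $[A_X,A_X]$ has $V(X)$-length at least $4$, since a reduced word of length $\le 3$ must have nonzero exponent vector. This disposes immediately of any generator $s$ with $\phi(s)\in[A_X,A_X]$. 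For the remaining generators the aim is to combine the edge-support condition above with a case analysis of the reduced forms of the short words $\phi(s)$ and of their commutators, in order to show that triviality of the total product cannot coexist with injectivity of $\phi$ unless some $\phi(s)$ has length $\ge 4$.

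The main obstacle is precisely this passage from the abelian obstruction to the word-length conclusion, and it is why I expect the statement to remain genuinely hard. First, one cannot simply reduce to the case that $\phi_*$ is injective on $H_1(\pi;\Q)$: a primitive homology class killed by $\phi_*$ need not be one of the generators in $S$ (it may be a combination such as $\overline{a}_1-\overline{a}_2$), so all individual generators can stay short while the abelianized bivector degenerates. Second, the configurations that satisfy the edge-support condition most cheaply are those in which many $\phi(s)$ are conjugates $u\,v\,u^{-1}$ of single generators, of length $3$ but $\ell^1$-norm $1$; these are exactly the words invisible to the identity above while still potentially yielding an injective $\phi$. Ruling them out appears to require a uniform understanding, valid for \emph{every} graph $X$ at once, of how short conjugates in $A_X$ can assemble into a surface relation while the total commutation they impose remains compatible with $\pi$ being one-ended. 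I do not see how to make this non-abelian, graph-theoretic analysis uniform, which is consistent with the result being offered here only as a conjecture.
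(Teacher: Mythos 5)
This statement is a \emph{conjecture} in the paper: there is no proof of it in the text, only the remark following it that sketches a possible strategy. So the only honest comparison is between your plan and that remark. Your proposal is likewise not a proof, and you say so yourself; what it actually establishes are two correct but weak necessary conditions. The naturality of the map $H_2(G)\to\Lambda^2 H_1(G)$ does force $\sum_i\phi_*(\overline{a}_i)\wedge\phi_*(\overline{b}_i)$ into the span of the $\overline{v}\wedge\overline{w}$ over edges $\{v,w\}$ of $X$, and it is true that every nontrivial element of $[A_X,A_X]$ has $V(X)$-norm at least $4$ (by Lemma \ref{lemma:fullyshort}, a fully reduced word of norm at most $3$ has nonvanishing exponent vector unless it is empty). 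But neither fact, nor their combination, touches the actual difficulty, which you correctly isolate and then leave untouched: a generator sent to a conjugate $uvu^{-1}$ of a single generator has length $3$, does not lie in $[A_X,A_X]$, and is invisible to the abelianized condition since its exponent vector is that of $v$. The whole content of the conjecture is excluding such configurations uniformly over all graphs $X$, and the announced ``case analysis of the reduced forms of the short words $\phi(s)$ and of their commutators'' is never carried out. Note also that the homological condition can be entirely vacuous: if $\phi_*[\Sigma]=0$ in $H_2(A_X)$, the identity reads $0\in\Lambda^2_E$, which constrains nothing, and injectivity of $\phi$ on $\pi_1$ does not force $\phi_*[\Sigma]\neq 0$.

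It is also worth noting that your approach differs in kind from the one the paper suggests. You work homologically and combinatorially inside $A_X$; the paper's remark proposes instead to use the Crisp--Wiest parameterization from \S\ref{section:surfaceraags}, which describes every homomorphism $\pi\to A_X$ via a collection of loops on the surface, and to show that any such homomorphism with $\|\phi(s)\|_{V(X)}\le 3$ for all $s\in S$ fails to be injective. That route converts the problem into a topological statement about loop configurations on $\Sigma_g$, which at least localizes the difficulty; your route leaves it as a statement about all graphs $X$ at once. Your observation about the commutator subgroup could survive as a small lemma in either framework, but as written the proposal is a research plan with its central step missing, not a proof.
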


\begin{remark}
As we will discuss in \S \ref{section:surfaceraags} below, Crisp--Wiest \cite{CrispWiest} gave
an explicit description of all homomorphisms from surface groups to RAAGs in terms of collections of
loops on the surface.  To prove Conjecture \ref{conjecture:optimalraag}, what one would have
to show is that if $\phi\colon \pi \rightarrow A_X$ is a map from a surface group to a RAAG
arising from the Crisp--Wiest construction that does not satisfy the conclusion of
Conjecture \ref{conjecture:optimalraag}, then $\phi$ is not injective.
\end{remark}

\subsection{Sublinearity}

We close by posing the following question:

\begin{question}
Does there exist a non-nilpotent residually nilpotent group $G$ equipped with a finite generating set $S$
such that $d_{G,S}$ grows sublinearly?
\end{question}

By Remark \ref{remark:lowerbound}, such a group $G$ cannot be a subgroup of a RAAG.

\subsection{Outline}
We prove Theorem \ref{maintheorem:raag} in \S \ref{section:raags} and Theorem \ref{maintheorem:surfacegroups} 
in \S \ref{section:surfacegroups}.  This last
section depends on the preliminary
\S \ref{section:surfaceraags}, which discusses work of Crisp--Wiest parameterizing maps from surface groups to RAAGs.

\subsection{Acknowledgments}
We thank Greg Kuperberg for some useful references.

\section{Right-angled Artin groups}
\label{section:raags}

Let $X$ be a finite graph with associated right-angled Artin group $A_X$.  In this section,
we first discuss some structural results about $A_X$ and then prove Theorem \ref{maintheorem:raag}.

\subsection{Monoid}
In addition to the right-angled Artin group $A_X$, we will also need the right-angled Artin monoid $M_X$.  This
is the associative monoid with the following presentation:
\begin{itemize}
\item The generators are the vertices $V(X)$ of $X$.  To distinguish these generators from the
corresponding generators of $A_X$, we will sometimes write them with bold-face letters.  In other words, $s$ denotes
an element of $A_X$ and $\bs$ denotes an element of $M_X$.
\item The relations are $\Set{$\bx\by = \by\bx$}{$x,y \in V(X)$ are joined by an edge}$.
\end{itemize}
There is a monoid homomorphism $M_X \rightarrow A_X$ whose image is the set of all elements
of $A_X$ that can be represented by ``positive words''.  As we will discuss below, this
monoid homomorphism is injective.

\subsection{Normal form}
Let $S = V(X)$ be the generating set for $A_X$ and $M_X$.  Consider a word
\[w = s_1^{e_1} \cdots s_n^{e_n} \quad \text{with $s_1,\ldots,s_n \in S$ and $e_1,\ldots,e_n \in \Z$}.\]
This word represents an element of $A_X$, and if $e_i \geq 0$ for all $1 \leq i \leq n$ it represents
an element of $M_X$ (here for conciseness we are not using our bold-face conventions).  We say that $w$ is {\em fully reduced} if it satisfies the following conditions:
\begin{itemize}
\item Each $e_i$ is nonzero.
\item For all $1 \leq i < j \leq n$ with $s_i = s_j$, there exists some $k$ with $i < k < j$ such that
$s_k$ does not commute\footnote{As observed earlier, $A_Y$ embeds in $A_X$ for any
vertex-induced subgraph $Y$, so this is equivalent to $s_k$ being distinct from
and not adjacent to $s_i = s_j$.} with $s_i = s_j$.
\end{itemize}
Note that this implies in particular that $s_i \neq s_{i+1}$ for all $1 \leq i < n$, so $w$ is reduced as
a word in the free group on $S$.  It is clear that every element of $A_X$ and $M_X$ can be represented
by a fully reduced word.  

This representation is unique in the following sense:
\begin{itemize}
\item Consider fully reduced words
\[w = s_1^{e_1} \cdots s_n^{e_n} \quad \text{and} \quad w' = t_1^{f_1} \cdots t_{m}^{f_m}\]
representing the same element of $A_X$ or $M_X$.  Then we can obtain $w'$ from $w$ by a sequence of {\em swaps}, i.e.,
flipping adjacent terms $s_i^{e_{i}}$ and $s_{i+1}^{e_{i+1}}$ such that $s_i$ commutes with $s_{i+1}$.
\end{itemize}
For $A_X$, this uniqueness was stated without proof by Servatius \cite{ServatiusAutos}.  The earliest proof
we are aware of is in Green's thesis \cite{GreenThesis}.  Alternate proofs can be found in
\cite[Proposition 9]{CrispWiest} and \cite[Theorem 4.14]{WadeSurvey}.
Using the monoid homomorphism $M_X \rightarrow A_X$, the uniqueness for $M_X$ follows\footnote{Whether this is a
circular argument depends on the proof of uniqueness used for $A_X$.  The geometric proof
from \cite[Proposition 9]{CrispWiest} works directly with groups, and does not even implicitly
prove anything about monoids.} from that of $A_X$.  Note that this uniqueness also implies
that the monoid homomorphism $M_X \rightarrow A_X$ is injective.

The following lemma shows that fully reduced words realize the word norm in $A_X$:

\begin{lemma}
\label{lemma:fullyshort}
Let $X$ be a finite graph.  Let $S = V(X)$ be the generating set for $A_X$.  Consider some $w \in A_X$, and represent
$w$ by a fully reduced word
\[w = s_1^{e_1} \cdots s_n^{e_n} \quad \text{with $s_1,\ldots,s_n \in S$ and $e_1,\ldots,e_n \in \Z$}.\]
Then $\|w\|_S = |e_1|+\cdots+|e_n|$.
\end{lemma}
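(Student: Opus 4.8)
The plan is to prove the two inequalities $\|w\|_S \leq |e_1| + \cdots + |e_n|$ and $\|w\|_S \geq |e_1| + \cdots + |e_n|$ separately. The first is immediate: expanding each syllable $s_i^{e_i}$ into $|e_i|$ copies of $s_i^{\pm 1}$ exhibits $w$ as a word of length $|e_1| + \cdots + |e_n|$ in $S^{\pm 1}$, so the word norm is at most this sum. All of the content is in the reverse inequality, which is the assertion that a fully reduced word is a geodesic.

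For the lower bound I would prove the following more general claim by induction on the number of syllables: if $w = t_1^{f_1} \cdots t_m^{f_m}$ is \emph{any} word, with $t_j \in S$ and $f_j \in \Z$, representing the given element, then $|f_1| + \cdots + |f_m| \geq |e_1| + \cdots + |e_n|$. Granting this, I would apply it to a geodesic word for $w$: such a word, read as a product of $\|w\|_S$ elements of $S^{\pm 1}$, has syllable exponents whose absolute values sum to exactly $\|w\|_S$, so the claim yields $\|w\|_S \geq |e_1| + \cdots + |e_n|$, completing the proof.

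To establish the claim I would distinguish two cases. If $t_1^{f_1} \cdots t_m^{f_m}$ is already fully reduced (in particular no $f_j$ is zero), then by the uniqueness of fully reduced forms up to swaps, established in the normal form discussion above, it differs from $s_1^{e_1} \cdots s_n^{e_n}$ only by a sequence of swaps, which permute the syllables without altering the multiset of exponents; hence $\sum_j |f_j| = \sum_i |e_i|$ and equality holds. Otherwise the word fails to be fully reduced, so either some $f_j = 0$, in which case deleting that syllable lowers $m$ while preserving both the element and $\sum_j |f_j|$, or there are indices $j < j'$ with $t_j = t_{j'}$ such that every intervening generator $t_k$ (for $j < k < j'$) commutes with $t_j$. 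In the latter subcase I would use swaps to slide $t_j^{f_j}$ rightward until it is adjacent to $t_{j'}^{f_{j'}}$ and then merge the two into the single syllable $t_j^{f_j + f_{j'}}$; this produces a word with one fewer syllable representing the same element, and by the triangle inequality $|f_j + f_{j'}| \leq |f_j| + |f_{j'}|$ the quantity $\sum_j |f_j|$ does not increase. In either subcase the inductive hypothesis applies to the shorter word and delivers the desired bound.

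The whole argument is a length-nonincreasing reduction to normal form, and the one genuine input is the base case, where I invoke uniqueness of fully reduced words up to swaps. I do not expect a serious obstacle beyond the bookkeeping: one must check that the reduction terminates (each non-base step strictly decreases the syllable count) and, more importantly, that every word which is not fully reduced admits one of the two reduction moves. This last point is exactly the content of the definition of \emph{fully reduced}, so the main care required is simply matching the negation of that definition to the two reduction cases above.
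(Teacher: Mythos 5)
Your proof is correct and is essentially the paper's argument written out in full: the paper's one-line proof invokes exactly the same two ingredients (uniqueness of fully reduced words up to swaps, and the fact that reducing an arbitrary word to fully reduced form does not increase length), and your induction is just the careful bookkeeping behind the second of these.
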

\begin{proof}
Immediate from the uniqueness up to swaps of fully reduced words as well as the fact that taking
an arbitrary word and putting it in fully reduced form does not lengthen the word.
\end{proof}

\subsection{Monoid ring}

Let $\Z[M_X]$ be the monoid ring whose elements are formal $\Z$-linear combinations of elements of $M_X$.  
Since the relations in $M_X$ are all of the form $\bx\by = \by\bx$ for generators $\bx$ and $\by$, 
all words representing an element $\bm \in M_X$ have the same length, which we will denote $\ell(\bm)$.  This length function
satisfies $\ell(\bm_1 \bm_2) = \ell(\bm_1)+\ell(\bm_2)$ for $\bm_1,\bm_2 \in M_X$.  For $k \geq 0$, define
\[M_X^{(k)} = \Set{$\bm \in M_X$}{$\ell(\bm) = k$}.\]
The monoid ring $\Z[M_X]$ is a graded ring with $\Z[M_X]_{(k)} = \Z[M_X^{(k)}]$.

\subsection{Partially commuting power series}

Let $I \subset \Z[M_X]$ be the ideal generated by the elements of the generating set $V(X)$.  For $k \geq 1$,
the ideal $I^k$ consists of $\Z$-linear combinations of $\bm \in M_X$ with $\ell(\bm) \geq k$.  Define
\[\cP_X = \lim_{\longleftarrow} \Z[M_X]/I^k.\]
Elements of the inverse limit $\cP_X$ can be regarded as power series
\[\sum_{k=0}^{\infty} \bm_k \quad \text{with $\bm_k \in \Z[M_X]_{(k)}$ for all $k \geq 0$}.\]
Each $\bm_k$ is a linear combination of products of $k$ generators from $V(X)$, some of which commute and
some of which do not.  Multiplication works in the usual way:
\[\left(\sum_{k=0}^{\infty} \bm_k\right) \left(\sum_{k'=0}^{\infty} \bm'_{k'}\right) = \sum_{\ell=0}^{\infty} \left(\sum_{k+k' = \ell} \bm_k \bm'_{k'}\right).\]

\subsection{Magnus representation}

We now discuss the Magnus representation of $A_X$, which was introduced by Droms in his thesis \cite{DromsThesis}, generalizing
classical work of Magnus for free groups (see \cite[Chapter 5]{MagnusKarrassSolitar}).  See \cite{WadeSurvey} for
a survey.  The starting point is the observation that for $s \in V(X)$, we have the following identity in
$\cP_X$:
\[(1+\bs)(1-\bs+\bs^2-\bs^3+\cdots) = 1.\]
In other words, $1+\bs$ is a unit in $\cP_X$.  If generators $s,s' \in V(X)$ commute, then $1+\bs$ and $1+\bs'$ also commute.  It follows that we can define
a homomorphism
\[\mu\colon A_X \longrightarrow \left(\cP_X\right)^{\times}\]
via the formula
\[\mu(s) = 1+\bs \quad \text{for $s \in V(X)$}.\]

\subsection{Dimension subgroups and the lower central series}

Recall that $I \subset \Z[M_X]$ is the ideal generated by elements of the generating set $V(X)$.  There
is a corresponding ideal $\cI \subset \cP_X$ consisting of 
all elements with constant term $0$. 
For $k \geq 1$, the $k^{\text{th}}$ {\em dimension subgroup} of $A_X$,
denoted $D_k(A_X)$, is the kernel of the composition
\[A_X \stackrel{\mu}{\longrightarrow} \cP_X \longrightarrow \cP_X/\cI^k.\]
In other words, $D_k(A_X)$ consists of elements $w \in A_X$ such that
\[\mu(w) = 1 + \left(\text{terms of degree at least $k$}\right).\]
The most important theorem about $D_k(A_X)$ identifies it with the $k^{\text{th}}$ term
of the lower central series of $A_X$:

\begin{theorem}[{\cite[Theorem 6.3]{WadeSurvey}}]
\label{theorem:lcsartin}
Let $X$ be a finite graph.  Then $D_k(A_X) = \gamma_k(A_X)$ for all $k \geq 1$.
\end{theorem}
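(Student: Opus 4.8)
The plan is to prove the two inclusions $\gamma_k(A_X) \subseteq D_k(A_X)$ and $D_k(A_X) \subseteq \gamma_k(A_X)$ separately. The first is a formal consequence of the compatibility of the Magnus representation with commutator calculus, while the second is the substantive direction and rests on an injectivity statement for the associated graded of $\mu$.

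For the inclusion $\gamma_k \subseteq D_k$, first note that $D_1 = A_X$, since every $\mu(w)$ has constant term $1$ and so maps to $0$ under the augmentation $\cP_X \to \cP_X/\cI$. I would then show that $\{D_k\}_{k \geq 1}$ is a central filtration, i.e.\ that $[D_i, D_j] \subseteq D_{i+j}$. Given $a \in D_i$ and $b \in D_j$, write $\mu(a) = 1 + \alpha$ and $\mu(b) = 1 + \beta$ with $\alpha \in \cI^i$ and $\beta \in \cI^j$. Since $\mu(a)$ and $\mu(b)$ are units in $\cP_X$, a direct expansion of $\mu([a,b]) = \mu(a)\mu(b)\mu(a)^{-1}\mu(b)^{-1}$ shows that $\mu([a,b]) - 1$ lies in $\cI^{i+j}$, with leading term the commutator $\alpha\beta - \beta\alpha$ modulo $\cI^{i+j+1}$. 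Hence $[D_i,D_j] \subseteq D_{i+j}$, and an induction starting from $D_1 = A_X$ gives $\gamma_k \subseteq D_k$ for all $k$.

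For the inclusion $D_k \subseteq \gamma_k$, I would pass to associated graded objects. The filtration of $\cP_X$ by powers of $\cI$ has associated graded $\mathrm{gr}_\cI \cP_X = \bigoplus_k \cI^k/\cI^{k+1}$, which is canonically isomorphic to the graded monoid ring $\Z[M_X] = \bigoplus_k \Z[M_X^{(k)}]$. By the first inclusion, $\mu$ carries $\gamma_k$ into $1 + \cI^k$, so it induces a homomorphism of graded Lie rings
\[\Theta \colon \mathrm{gr}_\gamma(A_X) = \bigoplus_{k \geq 1} \gamma_k/\gamma_{k+1} \longrightarrow \Z[M_X],\]
sending the class of $w \in \gamma_k$ to the degree-$k$ part of $\mu(w) - 1$, where the target carries its commutator bracket; the computation in the previous paragraph shows this is well defined and respects brackets. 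Granting that $\Theta$ is injective, the inclusion follows by a standard argument: for $w \in D_k$ with $w \neq 1$, let $j$ be the largest index with $w \in \gamma_j$, which is finite since $A_X$ is residually nilpotent. If $j < k$, then the class of $w$ in $\gamma_j/\gamma_{j+1}$ is nonzero, yet $w \in D_k \subseteq D_{j+1}$ forces the degree-$j$ part of $\mu(w)-1$ to vanish, so $\Theta$ annihilates this nonzero class, contradicting injectivity. Hence $j \geq k$ and $w \in \gamma_k$.

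The main obstacle is thus the injectivity of $\Theta$. Because the defining relations $[x,y]=1$ of $A_X$ hold for adjacent vertices, there is a surjection onto $\mathrm{gr}_\gamma(A_X)$ from the partially commutative free Lie ring $\fL_X$ — the quotient of the free Lie ring on $V(X)$ by the relations $[x,y]=0$ over edges $\{x,y\}$ of $X$ — and $\Theta$ fits into a commuting triangle with the natural map $\fL_X \to \Z[M_X]$ sending each generator $x$ to the monoid element $\bx$ and each Lie bracket to a ring commutator. It therefore suffices to prove that this map $\fL_X \to \Z[M_X]$ is injective, for then the surjection $\fL_X \twoheadrightarrow \mathrm{gr}_\gamma(A_X)$ is an isomorphism and $\Theta$ is injective. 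This is the partially commutative analogue of the classical Magnus–Witt theorem and is where all the combinatorial difficulty lies; I would establish it by a Lazard-elimination and PBW argument adapted to trace monoids, which simultaneously shows that each $\gamma_k/\gamma_{k+1}$ is free abelian and identifies $\mathrm{gr}_\gamma(A_X)$ with $\fL_X$, matching the results of Duchamp–Krob on the lower central series of partially commutative groups.
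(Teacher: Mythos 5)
You should first note that the paper does not actually prove this theorem: it quotes it from Wade's survey, and in the accompanying remark observes that only the easy inclusion $\gamma_k(A_X) \subseteq D_k(A_X)$ is needed for the paper's purposes, proved exactly as in your first paragraph (from $D_1(A_X) = A_X$ together with $[D_i(A_X), D_j(A_X)] \subseteq D_{i+j}(A_X)$). So on the portion of the statement that the paper itself verifies, your argument coincides with the paper's. Your outline of the converse inclusion is also the standard route --- it is essentially how Wade, following Duchamp--Krob, proves the full theorem: pass to associated gradeds, identify $\mathrm{gr}_{\cI}\,\cP_X$ with $\Z[M_X]$, factor $\Theta$ through the partially commutative free Lie ring $\fL_X$, and reduce everything to the injectivity of $\fL_X \to \Z[M_X]$. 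The formal reductions are carried out correctly: the descent argument is sound (the needed fact $\bigcap_k \gamma_k(A_X) = 1$ is a cited prior result of Droms, and can in any case be deduced from the easy inclusion together with $\bigcap_k D_k(A_X) = 1$, so there is no circularity), and the commuting triangle argument correctly transfers injectivity from $\fL_X \to \Z[M_X]$ to $\Theta$.

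The genuine gap is that the one nontrivial claim --- the injectivity of $\fL_X \to \Z[M_X]$, equivalently the partially commutative Magnus--Witt/PBW theorem --- is asserted rather than proved. Everything you do establish is formal commutator calculus; the entire content of $D_k(A_X) \subseteq \gamma_k(A_X)$ is concentrated in that injectivity statement, and ``a Lazard-elimination and PBW argument adapted to trace monoids'' is a plan, not an argument. To close the gap you would need either to execute that induction (splitting off a vertex of $X$ together with its star and analyzing the resulting semidirect-product decompositions of $\fL_X$ and $\Z[M_X]$), or to cite Duchamp--Krob or Wade for this step --- at which point you would simply be citing the same source the paper cites for the whole theorem.
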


\begin{remark}
In fact, for what follows all we need is the much easier fact that $\gamma_k(A_X) \subset D_k(A_X)$,
which appears in Droms's thesis \cite{DromsThesis}.  For this, since $D_1(A_X) = A_X = \gamma_1(A_X)$
it is enough to verify that
\[[D_k(A_X),D_{\ell}(A_X)] \subset D_{k+\ell}(A_X),\]
which is immediate from the definitions.
\end{remark}

\subsection{Lower bounds for the lower central series of a RAAG}

We close this section by proving Theorem \ref{maintheorem:raag}.  As we said in the introduction,
the proof closely follows ideas of Wade \cite{WadeSurvey}.

\begin{proof}[Proof of Theorem \ref{maintheorem:raag}]
We start by recalling the statement.  Let $X$ be a finite graph that is not a complete graph and
let $S = V(X)$ be the generating set for $A_X$.  Consider a nontrivial element $w \in A_X$, and
let $k = \|w\|_S$ be its word norm in the generating set $S$.  We must prove that
$w \notin \gamma_{k+1}(A_X)$.  By Theorem \ref{theorem:lcsartin}, it is enough to prove
that $w \notin D_{k+1}(A_X)$.

Represent $w$ by a fully reduced word:
\[w = s_1^{e_1} \cdots s_n^{e_n} \quad \text{with $s_1,\ldots,s_n \in S$ and $e_1,\ldots,e_n \in \Z$}.\]
By Lemma \ref{lemma:fullyshort}, we have
\[\|w\|_S = |e_1|+\cdots+|e_n| \geq n.\]
It is thus enough to prove that $w \notin D_{n+1}(A_X)$.  To do this, is enough to 
prove that a term of degree $n$ appears in $\mu(w) \in \cP_X$.

An easy induction shows that for all $1 \leq i \leq n$, we have
\[\mu(s_i^{e_i}) = (1+\bs_i)^{e_i} = 1 + e_i \bs_i + \bs_i^2 \bt_i \quad \text{for some $\bt_i \in \cP_X$}.\]
It follows that
\begin{equation}
\label{eqn:expandmu}
\mu(w) = \left(1+e_1 \bs_1 + \bs_1^2 \bt_1\right) \left(1+e_2 \bs_2 + \bs_2^2 \bt_2\right) \cdots \left(1+e_n \bs_n + \bs_n^2 \bt_n\right).
\end{equation}
Say that some $\bm \in M_X$ is {\em square-free} if it cannot be expressed as a word
in the generators $S = V(X)$ for the monoid $M_X$ with two consecutive letters the same generator.\footnote{Be warned
that it is possible for an element to have one such expression while not being square-free.
For instance, if $\bs,\bs' \in S$ are distinct commuting generators then $\bs \bs' \bs$ is not square-free
since $\bs \bs' \bs = \bs^2 \bs'$.}  It is immediate from the uniqueness up to swaps of fully reduced
words that the fully reduced word $\bs_1 \bs_2 \cdots \bs_n$ represents a square-free element of
$M_X$.  When we expand out \eqref{eqn:expandmu}, the only square-free term of degree
$n$ is
\[e_1 e_2 \cdots e_n \bs_1 \bs_2 \cdots \bs_n.\]
It follows that this degree $n$ term survives when we expand out $\mu(w)$, as desired.
\end{proof}

\section{Mapping surface groups to RAAGs}
\label{section:surfaceraags}

Before we can prove Theorem \ref{maintheorem:surfacegroups}, we must
discuss some work of Crisp--Wiest \cite{CrispWiest} that parameterizes maps from surface groups to RAAGs.
We will not need the most general form of their construction (which they prove can give {\em any} homomorphism
from a surface group to a RAAG), so we will only describe a special case of it.  Fix a closed oriented
surface $\Sigma$ and a basepoint $\ast \in \Sigma$.

\subsection{Crisp--Wiest construction}
A {\em simple dissection}\footnote{Crisp and Wiest use the term dissection for a collection of curves which satisfy some conditions and have a certain decoration. We add ``simple'' to indicate that
we do not have any decoration.} on $\Sigma$ is a finite collection $\cL$ of oriented simple closed curves
on $\Sigma$ satisfying the following conditions:
\begin{itemize}
\item None of the curves contain the basepoint $\ast$.
\item Any two curves in $\cL$ intersect transversely.
\item There are no triple intersection points between three curves in $\cL$.
\end{itemize}
For a simple dissection $\cL$, let $X(\cL)$ be the graph whose vertices are the curves in $\cL$ and where
two vertices are joined by an edge if the corresponding curves intersect.  Crisp--Wiest \cite{CrispWiest} proved that
the following gives a well-defined homomorphism $\phi\colon \pi_1(\Sigma,\ast) \rightarrow A_{X(\cL)}$:
\begin{itemize}
\item Consider some $x \in \pi_1(\Sigma,\ast)$.  Realize $x$ by an immersed based loop $\bx\colon [0,1] \rightarrow \Sigma$
that is transverse to all the curves in $\cL$ and avoids intersection points between curves of $\cL$.  If $\bx$ is disjoint from all the curves in $\cL$, then $\phi(x) = 1$.  Otherwise,
let
\[0 < t_1 < \cdots < t_n < 1\]
be the collection of all values such that $\bx(t_i)$ is contained in some $\gamma_i \in \cL$.  For $1 \leq i \leq n$, let
$e_i = \pm 1$ be the sign of the intersection of $\bx$ with the oriented loop $\gamma_i$ at $x(t_i)$.  Then
\[\phi(x) = \gamma_1^{e_1} \cdots \gamma_n^{e_n} \in A_{X(\cL)}.\]
\end{itemize}
We will say that $\phi$ is the map obtained by applying the {\em Crisp--Wiest construction} to $\cL$.

\subsection{Injectivity criterion}
\label{section:injectivitycriterion}

Crisp--Wiest \cite{CrispWiest} describe an approach for proving that $\phi$ is injective in certain cases.
To describe it, we must introduce some more terminology.  For a simple dissection
$\cL$ on $\Sigma$, let
\[G(\cL) = \bigcup_{\gamma \in \cL} \gamma,\]
which we view as a graph embedded in $\Sigma_g$ with a vertex for each intersection point
between curves in $\cL$.  We say that $\cL$ is a {\em filling curve system} if each
component of $\Sigma \setminus G(\cL)$ is a disk.  

For a component $U$ of $\Sigma \setminus G(\cL)$, the boundary of $U$ can be identified
with a circuit in the graph $G(\cL)$.  Say that $U$ satisfies the {\em injectivity criterion}
if the following holds for any two distinct edges $e$ and $e'$
in the boundary of $U$.  Let $\gamma$ and $\gamma'$ be the oriented curves in $\cL$
that contain $e$ and $e'$, respectively.  We then require that $\gamma \neq \gamma'$ and
that if $\gamma$ intersects $\gamma'$, then $e$ and $e'$ 
are adjacent edges in the boundary of $U$.

We can now state our injectivity criterion:

\begin{proposition}
\label{proposition:crispwiest}
Let $\Sigma$ be a closed oriented surface equipped with a basepoint
$\ast$ and let $\cL$ be a filling simple dissection on $\Sigma$.  
For all components $U$ of $\Sigma \setminus G(\cL)$, assume that $U$
satisfies the injectivity criterion.
Then the map $\phi\colon \pi_1(\Sigma,\ast) \rightarrow A_{X(\cL)}$ obtained
by applying the Crisp--Wiest construction to $\cL$ is injective.
\end{proposition}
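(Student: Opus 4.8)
The plan is to run the standard nonpositively curved cube complex argument that underlies \cite{CrispWiest}: realize $\phi$ by a combinatorial map to the Salvetti complex and show that this map is a local isometry, whence it is injective on $\pi_1$. Let $S_{X(\cL)}$ be the Salvetti complex of $A_{X(\cL)}$, i.e.\ the nonpositively curved cube complex with a single vertex, one oriented edge-loop for each generator (= each curve $\gamma \in \cL$), and one square for each commuting pair of generators (= each pair of curves that intersect). Its fundamental group is $A_{X(\cL)}$ and its universal cover is a $\cat(0)$ cube complex. The link of the unique vertex of $S_{X(\cL)}$ is the flag complex on the ``doubled'' vertex set $\{\gamma^{+},\gamma^{-} : \gamma \in \cL\}$, in which two vertices $\gamma^{\pm},\delta^{\pm}$ span an edge exactly when $\gamma$ and $\delta$ intersect. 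The point of reducing to a local isometry is the standard fact that a local isometry between nonpositively curved cube complexes lifts to an isometric (hence $\pi_1$-injective) embedding of $\cat(0)$ universal covers; since $\bbH^{2}$ is the universal cover of $\Sigma$, this yields injectivity of $\phi$.

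\textbf{The dual square complex.}
First I would put a cube-complex structure on $\Sigma$ dual to $G(\cL)$. Because $\cL$ is filling, the pair $(\Sigma, G(\cL))$ is a CW decomposition whose $2$-cells are the disk components $U$ of $\Sigma \setminus G(\cL)$, whose $1$-cells are the arcs of the curves between consecutive intersection points, and whose $0$-cells are the intersection points. Dualizing gives a square complex $C \cong \Sigma$ with one vertex per disk $U$, one edge per arc (labeled and oriented by the curve $\gamma$ carrying that arc), and one square per intersection point (its four sides labeled by the two crossing curves, which commute in $A_{X(\cL)}$ because they intersect). Sending each edge of $C$ to the generator loop of its label and each square to the corresponding square of $S_{X(\cL)}$ defines a combinatorial map $f \colon C \to S_{X(\cL)}$, and unwinding the Crisp--Wiest recipe (a based loop crosses arcs, contributing generators with signs) shows that $f_{\ast}$ agrees with $\phi$ on $\pi_1(\Sigma,\ast)$.

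\textbf{Link condition and the obstacle.}
The heart of the proof is to recognize the injectivity criterion as exactly the Gromov link condition making $f$ a local isometry. Since $U$ is a disk, the link of the vertex $U$ in $C$ is a single cycle whose vertices are the edges of $\partial U$ and whose edges are the corners of $\partial U$ at the intersection points. The first clause of the criterion, $\gamma \neq \gamma'$ for distinct boundary edges, says that distinct link-vertices of $U$ carry distinct labels, so the induced map on links is injective; the second clause, that intersecting curves $\gamma,\gamma'$ occur on \emph{adjacent} boundary edges, says that whenever two link-vertices of $U$ are joined by an edge in the target link (i.e.\ their curves commute), they are already joined in the link of $U$ --- precisely the statement that the image is a \emph{full} subcomplex. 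Together these give that the link map is an embedding onto a full subcomplex at every vertex, which is the local isometry condition. The step I expect to be the main obstacle is the careful bookkeeping here: one must verify that the dual square complex $C$ is genuinely nonpositively curved (its vertex links are simplicial cycles of length at least $4$, with no bigon or triangle faces slipping in), using fillingness together with the criterion, and then confirm rigorously that the two combinatorial clauses are equivalent to injectivity plus fullness of the link map, uniformly over all components $U$. Once $f$ is known to be a local isometry of nonpositively curved cube complexes, passing to $\cat(0)$ universal covers yields an isometric embedding, and hence $\phi$ is injective.
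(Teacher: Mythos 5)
Your proposal is correct and follows essentially the same route as the paper: both endow $\Sigma$ with the cube complex structure dual to $\cL$, build the combinatorial map to the Salvetti complex inducing $\phi$, and observe that the two clauses of the injectivity criterion translate exactly into the link of each dual vertex mapping injectively onto a full subcomplex of the link in the Salvetti complex, which gives $\pi_1$-injectivity (the paper outsources this last step to Theorem 1 of Crisp--Wiest rather than re-running the local isometry argument). The ``obstacle'' you flag is handled the same way in the paper, by noting that the link of a dual vertex is a single cycle through the boundary edges of the corresponding disk component.
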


While Proposition \ref{proposition:crispwiest} is not explicitly stated or proved in \cite{CrispWiest},
it is implicit in their work. We present a proof for the convenience of the reader. This requires some
preliminaries.

\begin{remark}
The injectivity criterion implies that the dual cubulation to $\cL$ we introduce below
is a {\em special cube complex} in the sense of Haglund--Wise \cite{HaglundWise}.  The paper
\cite{CrispWiest} predates \cite{HaglundWise}, and the machinery of \cite{HaglundWise} is
unnecessary for this application.
\end{remark}

\subsection{Salvetti complex}

Let $X$ be a finite graph and let $A_X$ be the corresponding right-angled Artin group.  The
{\em Salvetti complex} of $A_X$, denoted $\cS(X)$, is a certain non-positively curved cube complex\footnote{Here
a cube complex is non-positively curved if its universal cover is CAT(0).} with $\pi_1(\cS(X)) = A_X$.  
It can be constructed as follows.  Enumerate
the vertices of $X$ as
\[V(X) = \{v_1,\ldots,v_n\}.\]
Identify $S^1$ with the the unit circle in $\C$, so $1 \in S^1$ is a basepoint.  
For a subset $I \subset \{v_1,\ldots,v_n\}$ of cardinality $k$, let $S_I \cong (S^1)^k$ be 
\[S_I = \Set{$(z_1,\ldots,z_n) \in (S^1)^n$}{$z_i = 1$ for all $i$ with $v_i \notin I$}.\]
A subset $I \subset \{v_1,\ldots,v_n\}$ is a $k$-clique of $X$ if the subgraph of $X$
induced by $I$ is a complete subgraph on $k$ vertices.  A clique is a set of vertices
that forms a $k$-clique for some $k$.  With these definitions,
$\cS(X)$ is the union of the $S_I$ as $I$ ranges over cliques in $X$.  The space $\cS(X)$
can be given a cube complex structure containing a $k$-cube for each $k$-clique
in $X$.  In particular, it has a single vertex (i.e., $0$-cube) corresponding to the (empty)
$0$-clique.

\subsection{Dual cubulation}
Now let $\cL$ be a filling dissection on $\Sigma_g$.
We can form a cube complex structure on $\Sigma_g$ called the {\em cube complex structure dual to $\cL$} in the following way.
We start by defining the $1$-skeleton $\sigma$ of our cube complex structure:
\begin{itemize}
\item Put a vertex of $\sigma$ in the interior of each component of $\Sigma_g \setminus G(\cL)$.  For the component
containing the basepoint $\ast$, the vertex should be $\ast$.
\item For each edge $e$ of $G(\cL)$, connect the vertices in the components on either side of
$e$ by an edge of $\sigma$.
\end{itemize}
A component of $\Sigma_g \setminus G(\cL)$ looks like the following:\\
\centerline{\psfig{file=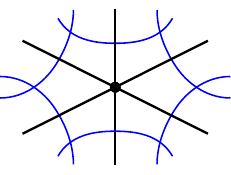,scale=1}}
Here $G(\cL)$ is in blue and $\sigma$ is in black.  Each edge coming out of the vertex of $\sigma$
shown in the figure terminates in the vertex in the adjacent component.

Now consider a component $C$ of $\Sigma_g \setminus \sigma$.  The component $C$ contains exactly one
vertex of $G(\cL)$, and the boundary of $C$ is composed of four edges of $\sigma$ as follows:\\
\centerline{\psfig{file=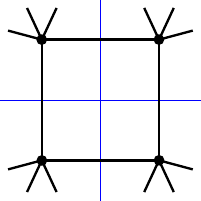,scale=1}}
Here again the graph $G(\cL)$ is blue and $\sigma$ is black.  Complete $\sigma$ to a cube complex
structure by attaching a square to each such $C$.

\subsection{Proof of Proposition \ref{proposition:crispwiest}}

We first recall what we must prove.  Let $\Sigma$ be a closed oriented surface equipped with a basepoint
$\ast$ and let $\cL$ be a filling simple dissection on $\Sigma$.
For all components $U$ of $\Sigma \setminus G(\cL)$, assume that $U$
satisfies the injectivity criterion.
We must prove that the map $\phi\colon \pi_1(\Sigma,\ast) \rightarrow A_{X(\cL)}$ obtained
by applying the Crisp--Wiest construction to $\cL$ is injective.

Endow $\Sigma$ with the cube complex structure dual to $\cL$, and let $\cS(X(\cL))$ be the Salvetti complex
of $A_{\cL(X)}$.  We start by constructing a map of cube complexes $f\colon \Sigma \rightarrow \cS(X(\cL))$
such that
\[f_{\ast}\colon \pi_1(\Sigma,\ast) \rightarrow \pi_1(\cS(X(\cL))) = A_{X(\cL)}\]
equals $\phi$.  Define $f$ as follows:
\begin{itemize}
\item The map $f$ sends each vertex of $\Sigma$ to the unique vertex of $\cS(X(\cL))$.
\item For an edge $e$ of $\Sigma$ that crosses an oriented loop $\gamma$ of $\cL$, the map
$f$ takes $e$ isometrically to the loop of $\cS(X(\cL))$ corresponding to the $1$-clique $\{\gamma\}$
of $X(\cL)$.  Orienting $e$ such that the intersection of $e$ with $\gamma$ is positive, we do
this such that $f(e)$ goes around the loop in the direction corresponding to the generator $\gamma$
of $\pi_1(\cS(X(\cL))) = A_{X(\cL)}$.
\item For a $2$-cube $c$ of $\Sigma$ centered at an intersection of loops $\gamma_1$ and $\gamma_2$
of $\cL$, the map $f$ sends $c$ isometrically to the $2$-cube corresponding to the $2$-clique
$\{\gamma_1,\gamma_2\}$ of $X(\cL)$.
\end{itemize}
With these definitions, it is clear that $f_{\ast} = \phi$.

By \cite[Theorem 1]{CrispWiest}, the map $f_{\ast} = \phi$ will be an injection if for every vertex
$v$ of $\Sigma$, the map $f$ take the link of $v$ injectively into a full subcomplex
of the link of $f(v)$ in $\cS(X(\cL))$.  These links have the following description:
\begin{itemize}
\item The vertex $v$ lies in some component $U$ of $\Sigma \setminus G(\cL)$.  The link of $v$ is
a cycle whose vertices are precisely the edges of $G(\cL)$ surrounding $U$.
\item The vertex $f(v)$ is the unique vertex of $\cS(X(\cL))$.  Its link is the following complex:
\begin{itemize}
\item There are two vertices for each generator $\gamma$ of $A_{X(\cL)}$ (or alternatively, each
$\gamma \in \cL$), one corresponding to the positive direction and the other to the negative direction.
\item A collection of vertices forms a simplex if they correspond to distinct generators of
$A_{X_{\cL}}$ all of which commute.
\end{itemize}
\end{itemize}
From this description, we see that the fact that $U$ satisfies the injectivity criterion 
ensures that $f$ takes the link of $v$ injectively into a full
subcomplex of the link of $f(v)$ in $\cS(X(\cL))$, as desired. \qed

\section{Bounds on surface groups}
\label{section:surfacegroups}

We now study the lower central series of surface groups and prove
Theorem \ref{maintheorem:surfacegroups}.

\begin{proof}[{Proof of Theorem \ref{maintheorem:surfacegroups}}]
We start by recalling the statement.  For some $g \geq 2$, let $\Sigma_g$ be a closed oriented genus $g$ surface
equipped with a basepoint $\ast$ and let $S = \{a_1,b_1,\ldots,a_g,b_g\}$ be the standard basis for $\pi = \pi_1(\Sigma_g,\ast)$.
Our goal is to prove that
$d_{\pi,S}(k) \geq \frac{1}{4} k$ for all $k \geq 1$.  Equivalently, consider
some nontrivial $w \in \gamma_k(\pi)$.  We must prove that $\|w\|_S \geq \frac{1}{4} k$.

What we will do is find a finite graph $X$ and an injective homomorphism
$\phi\colon \pi \rightarrow A_X$ such that letting $T = V(X)$ be the generating set for $A_X$, we have
$\|\phi(s)\|_{T} \leq 4$ for all $s \in S$.
We then have
$\phi(w) \in \gamma_k(A_X)$, and since $\phi$ is injective we have $\phi(w) \neq 1$.
Since $\pi$ is nonabelian the graph $X$ is not a complete graph, so
we can apply Theorem \ref{maintheorem:raag} to deduce that
$\|\phi(w)\|_{T} \geq k$.  Since $\|\phi(s)\|_{T} \leq 4$ for all $s \in S$,
we conclude that
\[\|w\|_S \geq \frac{1}{4} \|\phi(w)\|_{T} \geq \frac{1}{4} k,\]
as desired.

It remains to construct $X$ and $\phi$.  We can draw the elements of $S$ as follows, where $a_k$ ``encircles'' the $k$th hole from the left:\\
\centerline{\psfig{file=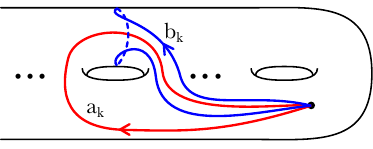,scale=1}}
Let
\[\cL = \{x_0,\ldots,x_g,y_1,\ldots,y_g,z\}\]
be the following simple dissection on $\Sigma_g$:\\
\centerline{\psfig{file=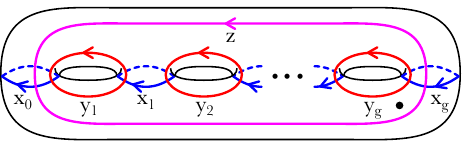,scale=1}}
Let $\phi\colon \pi \rightarrow A_{X(\cL)}$ be the homomorphism obtained by applying the Crisp--Wiest
construction to $\cL$ and let $T = V(X(\cL))$ be the generating set for $A_{X(\cL)}$.
There are four components of $\Sigma_g \setminus G(\cL)$, and by inspection each of them
satisfies the injectivity criterion from \S \ref{section:injectivitycriterion}.  
Proposition \ref{proposition:crispwiest} thus implies that $\phi$ is injective.
By construction, the following hold:
\begin{align*}
\phi(a_k) &= x_{k-1} x_k^{-1}, \\
\phi(b_k) &= x_k z y_k x_k^{-1}.
\end{align*}
These formulas imply that
$\|\phi(s)\|_{T} \leq 4$ for all $s \in S$, as desired.
\end{proof}

\end{document}